\date{\today}
\newcommand{\Z}{{\mathbb Z}}
\newcommand{\R}{{\mathbb R}}
\newcommand{\bbR}{{\mathbbm{R}}}
\newcommand{\bbZ}{{\mathbbm{Z}}}
\newtheorem{theorem}{Theorem}[section]
\newtheorem{lemma}[theorem]{Lemma}
\newtheorem{coro}[theorem]{Corollary}
\theoremstyle{definition}
\newtheorem{remark}[theorem]{Remark}
\theoremstyle{plain}
\allowdisplaybreaks \numberwithin{equation}{section}
\newcommand{\set}[1]{\left\{#1\right\}}
\begin{document}

\title[Limit-Periodic Operators With Lipschitz Continuous IDS]{Limit-Periodic Schr\"odinger Operators With Lipschitz Continuous IDS}

\author[D.\ Damanik]{David Damanik}
\address{Department of Mathematics, Rice University, Houston, TX~77005, USA}
\email{damanik@rice.edu}
\thanks{D.D.\ was supported in part by NSF grants DMS--1361625 and DMS--1700131.}

\author[J.\ Fillman]{Jake Fillman}
\address{Department of Mathematics, Virginia Polytechnic Institute and State University, 225 Stanger Street, Blacksburg, VA~24061, USA}
\email{fillman@vt.edu}
\thanks{J.F.\ was supported in part by an AMS-Simons travel grant, 2016--2018}

\begin{abstract}
We show that there exist limit-periodic Schr\"odinger operators such that the associated integrated density of states is Lipschitz continuous. These operators arise in the inverse spectral theoretic KAM approach of P\"oschel.
\end{abstract}

\maketitle

\section{Introduction}

The main result of this paper is the following.

\begin{theorem}\label{t.main}
There are limit-periodic Schr\"odinger operators with a Lip\-schitz continuous integrated density of states.
\end{theorem}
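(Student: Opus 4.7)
The strategy is to use P\"oschel's KAM-theoretic inverse spectral construction of limit-periodic Schr\"odinger operators and to extract quantitative information about the density of states along the iteration. In this construction one builds a limit-periodic potential $V = \lim_{k\to\infty} V_k$, where $V_k$ is periodic of period $p_k$, the periods $p_k$ grow rapidly, and $\|V-V_k\|_\infty$ decays super-exponentially. The operators $H_k = -\Delta + V_k$ are then periodic with $p_k$ Bloch bands, and the plan is to obtain uniform (in $k$) information about their integrated densities of states $N_k$.

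First, I would set up P\"oschel's iteration and record the explicit quantitative estimates it provides on band widths, gap sizes, and the Fourier supports of the perturbations. Second, I would analyze the $N_k$. Each carries square-root singularities at the band edges of $H_k$, so no finite-stage IDS is literally Lipschitz; the aim is rather to prove an estimate of the form
\[
N_k(E+\delta)-N_k(E-\delta) \;\le\; C\delta + \eta_k, \qquad E\in\R,\ \delta>0,
\]
with $\eta_k\to 0$ as $k\to\infty$ and $C$ independent of $k$. This requires showing that P\"oschel's free parameters (Fourier supports and amplitudes of the perturbations, Diophantine exponents) can be chosen so that the band edges of $H_k$ distribute with uniformly bounded density in energy and so that the contributions of the individual square-root tails telescope as the period is refined. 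Third, since $V_k\to V$ uniformly, the density-of-states measures converge weakly, $dN_k \to dN$; because the limit operator has purely absolutely continuous spectrum in P\"oschel's framework and in particular no eigenvalues, the limit IDS $N$ is continuous and the convergence upgrades to pointwise convergence of distribution functions. Letting $k\to\infty$ in the above display yields the genuine Lipschitz bound $|N(E)-N(E')|\le C|E-E'|$.

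The main obstacle is the uniform quasi-Lipschitz estimate in the second step. For a generic periodic operator the density of states blows up like $|E-E_*|^{-1/2}$ at each band edge, and naively summing such singularities over $p_k$ bands with $p_k \to \infty$ can easily produce an unbounded density in the limit. The required control asserts, roughly, that the band edges at scale $p_k$ do not cluster in any small energy window and that the coefficient of the square-root singularity at each such edge scales inversely with $p_k$, so that the total singular mass in any interval $[E,E+\delta]$ remains $O(\delta)$ uniformly in $k$. Verifying this within P\"oschel's KAM scheme, through a coordinated choice of Fourier supports, amplitudes, and small-denominator cutoffs at every stage of the iteration, is the technical heart of the proof.
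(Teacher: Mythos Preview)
Your proposal rests on a factual misidentification of P\"oschel's construction. The operators produced by the inverse spectral KAM scheme in \cite{P83} have \emph{pure point spectrum} with uniformly exponentially localized eigenfunctions; the title of \cite{P83} is literally ``Examples of discrete Schr\"odinger operators with pure point spectrum.'' Your third step asserts that ``the limit operator has purely absolutely continuous spectrum in P\"oschel's framework,'' and this is simply false. The error is not cosmetic: it is precisely the pure point nature, together with the uniform decay bound $|\psi_k(n)|^2 \le c\,d^{-|k-n|}$, that makes the Lipschitz estimate work in the paper's argument. One expands $\langle \delta_n, \chi_I(H)\delta_n\rangle = \sum_{E_k\in I}|\psi_k(n)|^2$, uses that for each dyadic interval $I_{m,j}$ the set of $k$ with $\lambda_k\in I_{m,j}$ is a single residue class $\ell + 2^m\bbZ$, and then evaluates the resulting geometric-series average explicitly to get a bound proportional to $2^{-m} = \varepsilon|I|$.

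Beyond the misattribution, your proposed route via periodic approximants with absolutely continuous spectral type runs into a genuine obstruction that the paper itself flags in the introduction (remark~(2)): when the spectral type is absolutely continuous and the spectrum is a homogeneous set, the density of states measure coincides with the equilibrium measure, and the IDS can be \emph{no better} than $\tfrac12$-H\"older continuous because of the square-root behavior at gap edges. Your plan to make the square-root tails ``telescope'' across scales is therefore aiming at something that generically cannot happen in the a.c.\ regime; you would at minimum need the limiting spectrum to fail to be homogeneous, and you give no mechanism for arranging this within the iteration. In short, the missing idea is that one must work in the uniformly localized (pure point) regime, where the eigenfunction expansion replaces Bloch--Floquet analysis and the band-edge singularities never enter.
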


The main point of this result is that it exhibits a new phenomenon. To the best of our knowledge there were no previous examples of Schr\"odinger operators with almost periodic potentials such that the associated integrated density of states (IDS) has such a strong regularity property. On the H\"older scale, the best previously known result states $\frac12$-H\"older continuity of the integrated density of states under suitable assumptions; compare, for example, \cite{AJ10, HA09}. We refer the reader also to \cite{B00, GS01, YZ14} for other results on the H\"older regularity of the IDS for almost periodic Schr\"odinger operators.

The  $\frac12$-H\"older continuity results mentioned above are typically optimal in the context in which they are established, as demonstrated by the presence of square-root singularities at the endpoints of the gaps of the spectrum (see, e.g., \cite{P06}). This explains why a result like Theorem~\ref{t.main} was not only not known, but is indeed quite surprising.

\medskip

Let us now make the terminology more precise. For the sake of simplicity we will work in the context of \emph{discrete one-dimensional Schr\"odinger operators}, that is, operators of the form
\begin{equation}\label{e.oper}
[H \psi](n) = \psi(n+1) + \psi(n-1) + V(n) \psi(n)
\end{equation}
acting in $\ell^2(\Z)$ with a \emph{potential} $V : \Z \to \R$. We write $H_V$ for $H$ if we want to emphasize the dependence on the potential. The potential $V$ is called \emph{almost periodic} if it is bounded and the set of its translates is relatively compact in $\ell^\infty(\Z)$. In other words,
$$
\Omega = \overline{ \{ V( \cdot - m) : m \in \Z \} }^{\| \cdot \|_\infty}
$$
is a compact subset of $\ell^\infty(\Z)$. It turns out that $\Omega$ is a compact abelian group, where the group structure is the one obtained by continuous extension of the natural group structure (induced by $\Z$) on the set of translates. Thus, $\Omega$ carries a natural measure, namely normalized Haar measure, which we denote by $\mu$. Then, the shift $S\omega = \omega(\cdot - 1)$ defines a minimal homeomorphism from $\Omega$ to itself, and the dynamical system $(\Omega,S)$ is uniquely ergodic (with $\mu$ being the unique preserved probability measure).

Note that each element $\omega$ of $\Omega$ belongs to $\ell^\infty(\Z)$ and hence can serve as the potential of a Schr\"odinger operator:
\begin{equation}\label{e.operfam}
[H_\omega \psi](n) = \psi(n+1) + \psi(n-1) + \omega(n) \psi(n)
\end{equation}
for $\psi \in \ell^2(\Z)$.

The \emph{integrated density of states} $k : \R \to \R$ (henceforth \emph{IDS}) can be introduced in many equivalent ways. Let us use the following definition, which is the most convenient for what follows later. By a standard result from the theory of ergodic Schr\"odinger operators \cite{CL90, CFKS87, DF18}, there exists a measure, denoted by $dk$ and called the \emph{density of states measure}, such that for every continuous function $g:\bbR \to \bbR$ and $\mu$-almost every $\omega \in \Omega$, we have
\begin{equation}\label{e.dosmeas}
\int g \, dk = \lim_{N \to \infty} \frac1N \sum_{n = 0}^{N-1} \langle \delta_n, g(H_\omega) \delta_n \rangle.
\end{equation}
Moreover, in the current setting, we know that \eqref{e.dosmeas} holds simultaneously for every continuous $g$ and every $\omega \in \Omega$ by unique ergodicity.

The distribution function $k$ of the density of states measure is the IDS associated with the family \eqref{e.operfam} (resp., the initial operator \eqref{e.oper}). It is also a standard result from the general theory that $k$ is always continuous \cite{CL90, CFKS87, DF18}, and many papers have been devoted to the investigation of properties of $k$ that go beyond this basic result.

\medskip

A potential $V$ is called \emph{periodic} if there is a \emph{period} $p \in \Z_+$ such that $V(\cdot - p) = V(\cdot)$; it is called \emph{limit-periodic} if it lies in the $\ell^\infty$-closure of the space of periodic potentials. It is well known, and not hard to see, that every limit-periodic $V$ is almost periodic in the sense above.

\medskip

Having recalled the necessary notions, it is now clear what our objective is. We wish to prove the existence of a limit-periodic $V$ such that for a suitable constant $C$, we have
\begin{equation}\label{e.goal}
\int \chi_I \, dk \le C |I|
\end{equation}
for any interval $I \subseteq \R$, where $|I|$ denotes the length of the interval $I$.

We will make crucial use of an inverse spectral theoretic KAM approach to limit-periodic Schr\"odinger operators due to P\"oschel \cite{P83}.
The relevant details will be described in Section~\ref{s.2}. The results from this paper allow us to estimate the left-hand side of \eqref{e.goal} in terms of quantities that are variants of a geometric series and hence can be computed explicitly. The necessary calculations are provided in Section~\ref{s.3}. With this explicit estimate in hand, the proof of Theorem~\ref{t.main} can then be concluded in Section~\ref{s.4}.

\medskip

Let us end the introduction with a few general remarks:

(1) We consider the one-dimensional case for convenience. P\"oschel's work \cite{P83} can treat limit-periodic Schr\"odinger operators in $\ell^2(\Z^d)$ for arbitrary $d \in \Z_+$. However, our main goal in this paper is to exhibit a new phenomenon and we have decided to do so in the simplest setting.

(2) The obstruction to an improvement of a $\frac12$-H\"older continuity result for the IDS holds fairly generally in the regime of absolutely continuous spectral measures. Concretely, if an almost periodic Schr\"odinger operator $H_V$ has purely absolutely continuous spectral type and its spectrum $\Sigma$ is a homogeneous set in the sense of Carleson \cite{carleson83}, then its density of states measure coincides with the equilibrium measure of $\Sigma$. Then, the IDS can be no better than $\frac12$-H\"older continuous by standard product formulae for the equilibrium measure; compare \cite[Eq.~(6.4.2)]{SY97}.

(3) The known classes of almost periodic Schr\"odinger operators with singular continuous spectral measures typically come with a rather thin spectrum (e.g., of zero Lebesgue measure) or are dual to operators with absolutely continuous spectrum. In either scenario, one would not expect the IDS to be very regular. It would be nice to prove a general result to this effect.

(4) The above remarks suggest that a necessary condition for a very regular (e.g., Lipschitz continuous) IDS in the context of almost periodic Schr\"odinger operators may be the pure point nature of the spectral measures.\footnote{Clearly, pure point spectrum is not sufficient, which may be seen by considering, e.g., the supercritical almost Mathieu operator with Diophantine frequency.} Indeed, the examples we construct have pure point spectrum. Actually, they are even uniformly localized, which is a much stronger property that does not occur very often. Concretely, uniform localization is known to fail for the Anderson model and the almost Mathieu operator (in the localized regime)~\cite[Appendix~C]{dRJLS1996}.

(5) Our result may be contrasted with the celebrated estimate of Wegner \cite{Wegner1981} for the Anderson model: Suppose $\{\omega(n)\}_{n \in \bbZ}$ is sequence of i.i.d.\ random variables in $\bbR$ whose common distribution (often called the \emph{single-site distribution}) is absolutely continuous with bounded density $g$. The associated IDS is Lipschitz continuous; indeed,
\[
|k(E_1) - k(E_2)|
\leq
C \| g\|_\infty |E_1 - E_2|
\]
for a constant $C$. For expository presentations, see \cite[Chapter~4]{AW2015} and \cite[Section~5]{Kirsch2008}. On the other hand, Lipschitz continuity may fail if the single-site distribution is too singular; this is proved explicitly in the strongly coupled Bernoulli case in \cite{CKM87}, but as pointed out there, the argument extends to some more general cases. In any event, even in the Anderson model, Lipschitz continuity of the IDS does not always hold and needs sufficient regularity of the single-site distribution.

\section{P\"oschel's Inverse Spectral Results}\label{s.2}

In this section we recall P\"oschel's approach to the spectral analysis of limit-periodic Schr\"odinger operators in the large-coupling regime \cite{P83}.
We focus on the inverse spectral aspect of this approach, as it is this aspect that we will employ below.

First, we define the notion of an \emph{approximation function}, which is P\"oschel's mechanism for making precise the notion of having divisors that decay ``not too quickly.'' Given a function $\Omega:[0,\infty) \to [0,\infty)$,
define
\begin{align*}
\Phi(t)
& :=
t^{-4} \sup_{r \ge 0} \Omega(r) e^{-t r} \\
\kappa_t
& :=
\set{\{t_j\}_{j = 0}^\infty : t \geq t_0 \geq t_1 \geq \cdots  \text{ and } \sum_{j = 0}^\infty t_j \leq t} \\
\Psi(t)
& :=
\inf_{\kappa_t} \prod_{j = 0}^\infty \Phi(t_j)2^{-j-1}
\end{align*}
for $t > 0$. We say that $\Omega$ is an \emph{approximation function} if $\Phi(t)$ and $\Psi(t)$ are finite for every $t > 0$. One can check that $\Omega(r) = r^\alpha$
is an approximation function for each $\alpha \ge 0$.

Now, suppose that $\mathcal{M} \subseteq \ell^\infty(\bbZ)$ is a Banach subalgebra with respect to pointwise addition and pointwise multiplication (in particular, $\mathcal{M}$ is assumed to contain the constant sequence 1).

Recall the shift on $\ell^\infty(\bbZ)$ is denoted by $S\omega = \omega(\cdot -1)$. We say that $\lambda : \bbZ \to \bbR$ is a \emph{distal sequence for} $\mathcal{M}$ if $\big( \lambda - S^k\lambda \big)^{-1} \in \mathcal{M}$ for each $k \neq 0$ and
\[
\left\| \big(\lambda - S^k \lambda\big)^{-1} \right\|_\infty
\leq
\Omega(|k|),
\quad
\text{for all } k \neq 0,
\]
where $\Omega$ is an approximation function.\footnote{Here, the inverse refers to the multiplicative inverse in the Banach algebra, where multiplication is taken to mean pointwise multiplication.}

\begin{theorem}[P\"oschel, 1983 \cite{P83}]
If $\lambda$ is a distal sequence for $\mathcal{M}$, then there exists $\varepsilon_0 > 0$ such that for any $0< \varepsilon \leq \varepsilon_0$, there is a sequence $V$ so that $\lambda - V \in \mathcal{M}$ and the Schr\"odinger operator $H_{\varepsilon^{-1}V} = \Delta + \varepsilon^{-1} V$ is spectrally localized with eigenvalues $\{ \varepsilon^{-1} \lambda_j : j \in \bbZ\}$. Moreover, if $\psi_k$ is the normalized eigenvector corresponding to the eigenvalue $\lambda_k$, then there are constants $c>0$ and $d>1$ such that
\[
|\psi_k(n)|^2
\leq
c d^{-|k-n|}
\]
for all $k$ and $n$.
\end{theorem}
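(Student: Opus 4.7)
The plan is to construct the desired operator by an inverse KAM iteration in the style of P\"oschel. The starting point is the diagonal operator $D := \operatorname{diag}(\lambda_j)$ on $\ell^2(\bbZ)$, which trivially realizes $\{\lambda_j\}$ as pure point spectrum with eigenvectors $\delta_j$. The goal is to produce a unitary $U$ and a sequence $V$ with $\lambda - V \in \mathcal{M}$ such that $UDU^* = V + \varepsilon \Delta$; after dividing by $\varepsilon$ one has $U(\varepsilon^{-1}D)U^* = \varepsilon^{-1}V + \Delta = H_{\varepsilon^{-1}V}$, so the columns $\psi_k := U\delta_k$ are the normalized eigenvectors with eigenvalues $\varepsilon^{-1}\lambda_k$, and the decay estimate $|\psi_k(n)|^2 \le cd^{-|k-n|}$ reduces to an off-diagonal decay estimate on the matrix entries of $U$.

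The unitary $U$ is built as an infinite product $U = \cdots e^{A_2} e^{A_1}$ via a Newton-type iteration. Assume inductively that after $n$ steps one has a unitary $U_n = e^{A_n}\cdots e^{A_1}$ and a diagonal correction $V_n$ (with $V_0 = \lambda$) such that $U_n D U_n^* - (V_n + \varepsilon\Delta) = R_n$, where $R_n$ is off-diagonal and ``quadratically small.'' Writing $U_{n+1} = e^{A_{n+1}} U_n$ with $A_{n+1}$ skew-Hermitian and off-diagonal, the linearization of the residual equation is the cohomological equation $[A_{n+1}, D] = R_n$, whose explicit matrix-element solution is $A_{n+1}(j,k) = R_n(j,k)/(\lambda_j - \lambda_k)$ for $j \ne k$. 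This introduces the small divisors $\lambda_j - \lambda_k$, and the distal-sequence hypothesis $(\lambda - S^k\lambda)^{-1} \in \mathcal{M}$ with $\|(\lambda - S^k\lambda)^{-1}\|_\infty \le \Omega(|k|)$ is tailored precisely so that this inversion can be carried out inside $\mathcal{M}$ (after decomposing $R_n$ by distance from the diagonal, so that the $k$-th ``off-diagonal slice'' of $R_n$ is divided by $\lambda - S^k\lambda$ and estimated by $\Omega(|k|)$ times the norm of the slice). The diagonal part of the residual gets absorbed into the update $V_{n+1} - V_n \in \mathcal{M}$, and a short calculation shows the new remainder $R_{n+1}$ is quadratic in $R_n$ modulo a controlled small-divisor loss.

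Convergence is the crux, and this is exactly what the auxiliary functions $\Phi$ and $\Psi$ are designed to control. To obtain the exponential off-diagonal decay needed for the claim about $\psi_k$, one runs the iteration throughout in an exponentially weighted matrix norm such as $\|A\|_\rho := \sup_j \sum_k e^{\rho|j-k|} |A(j,k)|$. Since $\Delta$ is nearest-neighbor, $\|R_0\|_\rho = O(\varepsilon)$ for every $\rho > 0$, and at each iteration one surrenders a small weight $\rho_n - \rho_{n+1} = t_n > 0$ in exchange for a small-divisor bound of size essentially $\Phi(t_n)$. Enforcing $\sum_n t_n \le t$ and invoking the finiteness of $\Psi(t) = \inf \prod \Phi(t_j) 2^{-j-1}$ shows that, for $\varepsilon$ sufficiently small, the cumulative product converges: $U := \lim U_n$ is a well-defined unitary with $\|U\|_{\rho_\infty} < \infty$ for some $\rho_\infty > 0$, and $V := \lim V_n$ satisfies $\lambda - V \in \mathcal{M}$. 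The off-diagonal decay $\|U\|_{\rho_\infty} < \infty$ immediately yields $|\psi_k(n)|^2 \le cd^{-|k-n|}$ with $d = e^{\rho_\infty}$, and since the $\psi_k$ form a complete orthonormal eigenbasis of $H_{\varepsilon^{-1}V}$ with eigenvalues $\{\varepsilon^{-1}\lambda_k\}$, spectral localization follows. The main technical obstacle is the simultaneous bookkeeping of the three competing scales---remainder norm, weight parameter, and small-divisor loss---so that the iterate stays inside $\mathcal{M}$ at every step and the infinite product still converges; balancing these three is exactly what the approximation-function formalism is engineered to perform.
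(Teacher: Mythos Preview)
The paper does not prove this theorem; it is stated as a result of P\"oschel and cited from \cite{P83} without proof. So there is no ``paper's own proof'' to compare against. Your sketch is a faithful outline of P\"oschel's actual inverse KAM scheme: the Newton iteration conjugating the diagonal operator, the cohomological equation $[A,D]=R$ solved diagonal-by-diagonal using the distal hypothesis, the quadratic convergence of the remainders, and the exponentially weighted matrix norms that translate into the decay bound on the eigenvectors via the off-diagonal decay of $U$. As a high-level roadmap it is essentially correct; what is suppressed is the hard analysis---the precise iteration lemma with explicit constants, the verification that the corrections $V_{n+1}-V_n$ remain in $\mathcal{M}$ at every step, and the combinatorics showing that the approximation-function hypothesis (finiteness of $\Psi$) really closes the induction. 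None of that can be written down in a paragraph, which is why the present paper simply quotes the result.
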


\noindent \textbf{P\"oschel's Example: A Limit-Periodic Distal Sequence.} Let $\mathcal{P}_n$ denote the set of sequences in $\ell^\infty(\bbZ)$ having period $2^n$, and $\mathcal{P} = \bigcup_n \mathcal{P}_n$; the space
\[
\mathcal{L}
=
\overline{\mathcal{P}}
\]
is a Banach algebra and a subspace of the space of all limit-periodic sequences. We may construct a distal sequence for $\mathcal{L}$ as follows. For $j \in \bbZ_+$, define the set $A_j$ by
\[
A_j
:=
\begin{cases}
\bigcup_{N \in \bbZ} [N \cdot 2^j, N\cdot 2^j + 2^{j-1}), & j \text{ even};\\[2mm]
\bigcup_{N \in \bbZ} [N \cdot 2^j+2^{j-1}, (N+1)\cdot 2^j), & j \text{ odd}.
\end{cases}
\]
For example, $A_1 = 2\Z+1$, the collection of odd integers. Let $a_j = \chi_{A_j}$ denote the characteristic function of $a_j$. Then, the sequence
\[
\lambda_n
=
\sum_{j=1}^\infty a_j(n) 2^{-j}
\]
belongs to $\mathcal{L}$; moreover, the inequality
\[
\left\| \big(\lambda - S^k\lambda \big)^{-1} \right\|
\leq
16|k|
\]
for $k \neq 0$ means that $\lambda$ is distal.

In order to prove Theorem~\ref{t.main}, we will need to characterize the integers for which $\lambda_n$ belongs to dyadic intervals of the form
\begin{equation} \label{eq:dyadicIntDef}
I_{m,j}
:=
\left[\frac{j}{2^m}, \frac{j+1}{2^m} \right).
\end{equation}

\begin{lemma} \label{l:dyadicLanding}
For any $m \in \bbZ_+$ and any integer $0 \leq j < 2^m$, there is an integer $\ell = \ell(j,m)$ so that
\[
\lambda_k \in I_{m,j}
\iff
k \in \ell + 2^m \bbZ.
\]
\end{lemma}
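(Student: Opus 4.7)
The plan is to split the series at index $m$:
\[
\lambda_n = S_m(n) + T_m(n), \qquad S_m(n) := \sum_{i=1}^m a_i(n)\, 2^{-i}, \qquad T_m(n) := \sum_{i > m} a_i(n)\, 2^{-i}.
\]
Because each $a_i$ has period $2^i$, which divides $2^m$ for $i \le m$, the partial sum $S_m$ is periodic with period dividing $2^m$, and $2^m S_m(n) = \sum_{i=1}^m a_i(n) 2^{m-i}$ always takes values in $\{0, 1, \dots, 2^m - 1\}$; meanwhile $T_m(n) \in [0, 2^{-m}]$. Once I establish the \emph{strict} inequality $T_m(n) < 2^{-m}$, it follows that $\lambda_n$ lies in exactly one dyadic interval $I_{m,j}$, namely the one with $j/2^m = S_m(n)$, and the lemma reduces to showing that the level set $\{k \in \bbZ : S_m(k) = j/2^m\}$ is a single residue class modulo $2^m$.

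For the strict inequality, equality $T_m(n) = 2^{-m}$ would force $a_i(n) = 1$ for every $i > m$. I would introduce $c_i(n) := \lfloor (n \bmod 2^i)/2^{i-1} \rfloor \in \{0, 1\}$, the $i$-th binary digit of the canonical representative of $n$ in $[0, 2^i)$, and unpack the definition of $A_i$ to obtain $a_i(n) = 1 - c_i(n)$ for $i$ even and $a_i(n) = c_i(n)$ for $i$ odd. The digits $c_i(n)$ are eventually $0$ when $n \ge 0$ and eventually $1$ when $n < 0$, so in either case $a_i(n) = 0$ for infinitely many $i > m$, and the strict bound $T_m(n) < 2^{-m}$ follows.

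To finish, I would show that $k \mapsto S_m(k)$ descends to a bijection from $\bbZ/2^m\bbZ$ onto $\{j/2^m : 0 \le j < 2^m\}$. Both sets have cardinality $2^m$, so surjectivity is enough. Identifying a residue class with its canonical representative $k \in [0, 2^m)$ and writing $k = \sum_{i=1}^m c_i\, 2^{i-1}$ with $c_i \in \{0,1\}$, the formulas above show that $(a_1(k), \dots, a_m(k))$ is obtained from $(c_1, \dots, c_m)$ by complementing every coordinate of even index, hence is itself a bijection of $\{0,1\}^m$ with itself. Composing with the binary decoding $(b_1, \dots, b_m) \mapsto \sum_i b_i 2^{m-i}$ yields a bijection between $\bbZ/2^m\bbZ$ and $\{0, 1, \dots, 2^m - 1\}$, which furnishes the unique residue class $\ell = \ell(j, m)$. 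The only real obstacle is the strict tail inequality in the second paragraph; everything else is bookkeeping in binary arithmetic.
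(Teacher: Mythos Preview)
Your proposal is correct and follows essentially the same three-step skeleton as the paper: (i) the tail $\sum_{i>m} a_i(k)2^{-i}$ is strictly less than $2^{-m}$ because $\{a_i(k)\}$ cannot end in an infinite string of ones; (ii) hence $\lambda_k \in I_{m,j}$ is determined entirely by $(a_1(k),\ldots,a_m(k))$; (iii) this tuple, viewed as a $2^m$-periodic map $\bbZ \to \{0,1\}^m$, is a bijection on one period.

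The only substantive difference is in how step~(iii) is justified: the paper outsources the bijectivity to \cite[Lemma~2.1]{P83}, whereas you unpack the definition of $A_i$ to obtain the explicit formula $a_i(k) = c_i(k)$ for $i$ odd and $a_i(k) = 1 - c_i(k)$ for $i$ even, where $c_i(k)$ is the $i$-th binary digit of $k \bmod 2^i$. This makes the bijection transparent (complementing even-indexed coordinates is an involution of $\{0,1\}^m$) and also gives a clean route to the tail bound, so your version is fully self-contained while the paper's relies on the cited reference. Otherwise the arguments coincide.
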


\begin{proof}
Given $k \in \bbZ$, let us first note that the sequence $\{a_j(k)\}_{j=1}^\infty$ cannot terminate in an infinite string of ones. Concretely, suppose $k \geq 0$ and choose $r$ even and large enough that $2^r > k$. Then, for every odd $s > r$, one has $k \in [0,2^{s-1})$, whence $k \notin A_s$. The argument for $k < 0$ is similar.

Given $j$ and $m$, let $j = \sum_{i=1}^{m} b_i 2^{i-1}$ with $b_i \in\{0, 1\}$. Then, since the sequence $\{a_i(k)\}$ cannot terminate in an infinite string of $1$'s, it follows that $\lambda_k \in I_{m,j}$ if and only if $a_i(k) = b_i$ for every $1 \le i \le m$. By \cite[Lemma~2.1]{P83}, there is a unique $\ell \in [0,2^m)$ such that
\[
(a_1(\ell), \ldots a_m(\ell))
=
(b_1,\ldots,b_m).
\]
By the definition of the $a_j$, $(a_1,\ldots,a_m)$ is a $2^m$-periodic map from $\bbZ$ to $\{0,1\}^m$, which concludes the proof of the lemma.
\end{proof}

\section{Fun and Games with Geometric Series}\label{s.3}

In this section we prove some basic statements about quantities related to geometric series. These results will allow us in the next section to estimate the weight assigned by the density of states measure of the operators in question to suitable intervals.

\begin{lemma} \label{l:latticesum}
Let $d > 1$, $\Delta > 0$. Then, for all $x \in \bbR$,
\begin{equation} \label{eq:latticesum}
\sum_{j \in \bbZ} d^{-|x-j\Delta|}
=
\frac{d^{-s} + d^{-(\Delta - s)}}{1-d^{-\Delta}},
\end{equation}
where $s := \mathrm{dist}(x,\Delta\bbZ)$.
\end{lemma}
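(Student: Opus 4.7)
The plan is to reduce the bi-infinite sum to two one-sided geometric series by splitting at the lattice point of $\Delta\bbZ$ nearest to $x$ (from below). Once this is done, the result reduces to elementary manipulation of the resulting closed forms.

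First I would locate a canonical representative. Choose the unique integer $j_0$ with $x - j_0 \Delta \in [0, \Delta)$, set $s_+ := x - j_0 \Delta$ and $s_- := \Delta - s_+ = (j_0+1)\Delta - x$. These are the distances from $x$ to the lattice point immediately to its left and right, respectively, and by definition of $s$ one has $s = \min(s_+, s_-)$, while $\{s_+, s_-\} = \{s, \Delta - s\}$ as an unordered pair.

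Next I would split the sum $\sum_{j \in \bbZ} d^{-|x - j\Delta|}$ into $j \leq j_0$ and $j > j_0$. For $j \leq j_0$, $|x - j\Delta| = s_+ + (j_0 - j)\Delta$; reindexing by $k = j_0 - j \geq 0$ produces the geometric series
\[
\sum_{k=0}^\infty d^{-s_+ - k\Delta}
=
\frac{d^{-s_+}}{1 - d^{-\Delta}}.
\]
For $j > j_0$, $|x - j\Delta| = (j - j_0 - 1)\Delta + s_-$; reindexing by $k = j - j_0 - 1 \geq 0$ gives
\[
\sum_{k=0}^\infty d^{-s_- - k\Delta}
=
\frac{d^{-s_-}}{1 - d^{-\Delta}}.
\]
Adding these and using $\{s_+, s_-\} = \{s, \Delta-s\}$ to rewrite $d^{-s_+} + d^{-s_-} = d^{-s} + d^{-(\Delta-s)}$ yields \eqref{eq:latticesum}.

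There is no genuine obstacle here; the only thing to be careful about is the bookkeeping of the absolute value and the convention that each lattice point be counted exactly once (hence the asymmetric split $j \leq j_0$ vs.\ $j > j_0$, which naturally handles the boundary case $x \in \Delta\bbZ$ where $s_+ = 0$, $s_- = \Delta$, and $s = 0$).
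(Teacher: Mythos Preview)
Your proof is correct and follows essentially the same approach as the paper's: both reduce the bi-infinite sum to two one-sided geometric series by splitting at the nearest lattice point. The only cosmetic difference is that the paper first invokes $\Delta$-periodicity to reduce to $x \in [0,\Delta)$ and then splits at $j=0,1$, whereas you absorb that reduction into the choice of $j_0$ and handle the identification of $s$ via the unordered pair $\{s_+,s_-\} = \{s,\Delta-s\}$ rather than a case split on $[0,\Delta/2)$ versus $[\Delta/2,\Delta)$.
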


\begin{proof}
Let $g(x)$ denote the left-hand side of \eqref{eq:latticesum}, and note that
\[
g(x+\Delta) \equiv g(x),
\]
so it suffices to consider $x \in [0,\Delta)$; for such $x$, one has
\[
\begin{cases}
x - j\Delta \geq 0 & \text{whenever } j \leq 0 \\
x - j\Delta \leq 0 & \text{whenever } j \geq 1.
\end{cases}
\]
Then,
\begin{align*}
g(x)
& =
\sum_{j=1}^\infty d^{-(j\Delta - x)} + \sum_{j=-\infty}^0 d^{-(x-j\Delta)} \\[1.5mm]
& =
\frac{d^{-(\Delta - x)}}{1-d^{-\Delta}} + \frac{d^{-x}}{1-d^{-\Delta}} \\[1.5mm]
& =
\frac{d^{-s} + d^{-(\Delta-s)}}{1 - d^{-\Delta}}.
\end{align*}
In the final line we used that $s=x$ for $x \in [0,\Delta/2)$ and $s = \Delta - x$ for $x \in [\Delta/2,\Delta)$.
Thus, the right-hand side of \eqref{eq:latticesum} and $g$ coincide for $x \in [0,\Delta)$. Since the right-hand side of \eqref{eq:latticesum} is clearly $\Delta$-periodic in $x$, we are done.
\end{proof}

\begin{coro}
For $d>1$, $m \in \bbZ_+$, $\ell \in \bbZ$, we have
\begin{equation} \label{eq:eigEst}
\lim_{N \to \infty} \frac{1}{N} \sum_{j\in \bbZ} \sum_{n=0}^{N-1} d^{-|n - \ell - j2^m|}
=
2^{-m} \cdot \frac{1+d^{-1}}{1-d^{-1}}.
\end{equation}
\end{coro}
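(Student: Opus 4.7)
The plan is to reduce the double sum to a Cesàro average of a periodic function and then evaluate the period-average in closed form using the previous lemma. The only real subtlety is justifying the swap of the two sums and identifying the periodicity; once that is done, everything is a direct computation.

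First I would interchange the order of summation (allowed since all terms are nonnegative) to rewrite the left-hand side as
\[
\lim_{N \to \infty} \frac{1}{N} \sum_{n = 0}^{N-1} f(n),
\qquad
f(n) := \sum_{j \in \bbZ} d^{-|n-\ell - j 2^m|}.
\]
Applying Lemma~\ref{l:latticesum} with $\Delta = 2^m$ and $x = n - \ell$ gives the closed form
\[
f(n) = \frac{d^{-s_n} + d^{-(2^m - s_n)}}{1 - d^{-2^m}},
\qquad
s_n = \dist(n-\ell,\, 2^m \bbZ).
\]
Because $s_n$ depends only on $n - \ell \pmod{2^m}$, the function $f$ is $2^m$-periodic. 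Thus the Cesàro limit exists and equals the average of $f$ over one full period, so the claim reduces to showing
\[
\frac{1}{2^m} \sum_{n = 0}^{2^m - 1} \frac{d^{-s_n} + d^{-(2^m - s_n)}}{1 - d^{-2^m}}
=
2^{-m} \cdot \frac{1+d^{-1}}{1-d^{-1}}.
\]

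For this last identity, I would reindex by $r = (n-\ell) \bmod 2^m$, so $r$ runs through $\{0,1,\dots,2^m-1\}$ and $s_n = \min(r, 2^m - r)$. The key observation is that for every such $r$ one has
\[
d^{-s_n} + d^{-(2^m - s_n)}
=
d^{-\min(r,2^m-r)} + d^{-\max(r,2^m-r)}
=
d^{-r} + d^{-(2^m - r)},
\]
so that summing over $r$ and splitting into two geometric sums yields
\[
\sum_{r=0}^{2^m-1}\bigl( d^{-r} + d^{-(2^m-r)} \bigr)
=
\frac{1 - d^{-2^m}}{1 - d^{-1}} + d^{-1} \cdot \frac{1 - d^{-2^m}}{1 - d^{-1}}
=
\frac{(1+d^{-1})(1 - d^{-2^m})}{1 - d^{-1}}.
\]
Dividing by $2^m(1-d^{-2^m})$ produces the claimed value $2^{-m}(1+d^{-1})/(1-d^{-1})$.

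The main (very mild) obstacle is the bookkeeping in the last step, in particular noticing the symmetry $d^{-s_n}+d^{-(2^m-s_n)} = d^{-r}+d^{-(2^m-r)}$ which lets the factor $1 - d^{-2^m}$ cancel cleanly; without this cancellation one would have to work with the unwieldy piecewise description of $s_n$. Everything else is standard manipulation of absolutely convergent geometric series, and the passage from the Cesàro average of a bounded periodic function to its period-average is immediate.
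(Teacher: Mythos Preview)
Your proof is correct and follows essentially the same route as the paper: apply Lemma~\ref{l:latticesum} to the inner sum, observe $2^m$-periodicity, and replace the Ces\`aro limit by the period average. The only difference is in the final bookkeeping: the paper counts how often each value of $s$ is attained and arrives at $\bigl(2\sum_{i=0}^{2^m} d^{-i}\bigr) - 1 - d^{-2^m}$, whereas your symmetry observation $d^{-s_n}+d^{-(2^m-s_n)} = d^{-r}+d^{-(2^m-r)}$ sidesteps the piecewise description of $s_n$ and leads directly to two geometric sums; this is arguably a bit cleaner, but it is the same argument in substance.
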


\begin{proof}
For $n \in \bbZ$, Lemma~\ref{l:latticesum} yields
\[
\phi(n)
:=
\sum_{j \in \bbZ} d^{-|n - \ell - j2^m|}
=
\frac{d^{-s} + d^{-(2^m-s)}}{1 - d^{-2^m}},
\]
where $s = \mathrm{dist}(n - \ell,2^m\bbZ)$. As observed in Lemma~\ref{l:latticesum}, one has $\phi(n+2^m) \equiv \phi(n)$ and hence the limit on the left-hand side of \eqref{eq:eigEst} exists and satisfies
\begin{equation} \label{eq:avglim}
\lim_{N \to \infty} \frac{1}{N} \sum_{j\in \bbZ} \sum_{n=0}^{N-1} d^{-|n - \ell - j2^m|}
=
2^{-m} \sum_{n=0}^{2^m-1} \phi(n).
\end{equation}
As $n$ ranges from $0$ to $2^m-1$, $s$ attains all values in $(0,2^{m-1}) \cap \bbZ$ twice and attains the values $0$ and $2^{m-1}$ once each; note this means that $2^m-s$ attains each value in $[2^{m-1},2^m]\cap \bbZ$ twice except for the endpoints. Thus, we get
\begin{align*}
\sum_{n=0}^{2^m-1} \phi(n)
& =
\frac{1}{1-d^{-2^m}} \left( \left( 2\sum_{i=0}^{2^m} d^{-i} \right) -1 - d^{-2^m} \right) \\[2mm]
& =
\frac{1}{1-d^{-2^m}} \left( \frac{2(1-d^{-2^m-1})}{1-d^{-1}} - (1+d^{-2^m}) \right) \\[2mm]
& =
\frac{1+d^{-1}}{1-d^{-1}}.
\end{align*}
In view of \eqref{eq:avglim}, we are done.
\end{proof}

\section{Proof of Theorem~\ref{t.main}}\label{s.4}

In this section we prove Theorem~\ref{t.main}. In the inverse spectral theoretic P\"oschel approach, we will choose the initial distal sequence so that for $\varepsilon$ small enough, the resulting limit-periodic operator satisfies the desired estimate \eqref{e.goal} with a suitable constant $C$ after rescaling the energy to account for the $\varepsilon$ factor. We will see that the primary object that determines the size of $C$ is the uniform exponential decay rate of the eigenfunctions.

\medskip

Since it is important for our purposes, let us briefly note that (in the present setting) \eqref{e.dosmeas} holds for all $\omega \in \Omega$ with $g$ replaced by $\chi_I$; that is,
\begin{equation} \label{eq:dosmeaschi}
\int \chi_I \, dk
=
\lim_{N \to \infty} \frac1N \sum_{n = 0}^{N-1} \langle \delta_n, \chi_I(H_\omega) \delta_n \rangle.
\end{equation}
To see this, let $\varepsilon > 0$ and choose continuous functions $f \le \chi_I \le g$ so that
\[
\int (\chi_I - f) \, dk < \varepsilon
\quad
\text{and}
\int (g - \chi_I) \, dk < \varepsilon;
\]
note that this already uses continuity of $dk$. Then, for $N$ large enough,
\begin{align*}
\frac{1}{N} \sum_{n=0}^{N-1} \langle \delta_n, \chi_I(H_\omega) \delta_n \rangle
& \leq
\frac{1}{N} \sum_{n=0}^{N-1} \langle \delta_n, g(H_\omega) \delta_n \rangle \\
& <
\int g \, dk + \varepsilon \\
& <
\int \chi_I \, dk + 2 \varepsilon.
\end{align*}
Similarly, approximating $\chi_I$ with $f$ instead of $g$, one gets
\[
\frac{1}{N} \sum_{n=0}^{N-1} \langle \delta_n, \chi_I(H_\omega) \delta_n \rangle
>
\int \chi_I \, dk - 2\varepsilon
\]
for all sufficiently large $N$; thus, \eqref{eq:dosmeaschi} follows.

\begin{remark}
Alternatively, one may start with the statement that, for every $I$, \eqref{eq:dosmeaschi} holds for $\mu$-almost every $\omega$, which follows readily from Birkhoff's ergodic theorem. Picking an $\omega$ for which \eqref{eq:dosmeaschi} holds, we may use the extension of P\"oschel's work to \emph{all} $\omega \in \Omega$ by Damanik and Gan \cite{DG11, DG13} and proceed in a similar fashion as below, but with $H$ replaced by $H_\omega$.
\end{remark}

\begin{proof}[Proof of Theorem~\ref{t.main}]
Let $\lambda$ denote the distal sequence defined in Section~\ref{s.2} and $\varepsilon > 0$ sufficiently small. Then, as discussed in Section~\ref{s.2}, there is a limit-periodic potential $V$ so that $H := \Delta + \varepsilon^{-1} V$ has eignevalues $\{\varepsilon^{-1} \lambda_k\}_{k \in \bbZ}$ and a complete set of eigenfunctions $\{\psi_k\}$ so that $\varepsilon H \psi_k = \lambda_k \psi_k$ and
\begin{equation} \label{eq:eigDecay}
|\psi_k(n)|^2
\leq
c d^{-|n-k|}
\end{equation}
for constants $c>0$ and $d>1$.

\medskip



As already mentioned, our goal is to prove \eqref{e.goal} with a suitable constant $C$. It clearly suffices to do this for every (rescaled) dyadic interval $I$ of the form
\begin{equation}\label{e.intervallength}
I
=
\varepsilon^{-1} I_{m,j},
\end{equation}
where $I_{m,j} = [j2^{-m},(j+1)2^{-m})$ as in \eqref{eq:dyadicIntDef}. For convenience, we define $E_k = \varepsilon^{-1} \lambda_k$ so that $E_k \in I$ if and only if $\lambda_k \in I_{m,j}$. Thus, by Lemma~\ref{l:dyadicLanding}, there exists $\ell = \ell(I)$ so that
\begin{equation}\label{e.eigenvalues}
E_k \in I \Leftrightarrow k \in \{\ell + j 2^m : j \in \Z \}.
\end{equation}
Recall that by \eqref{eq:dosmeaschi}, we have
\begin{equation}\label{e.dosmeas2}
\int \chi_I \, dk = \lim_{N \to \infty} \frac1N \sum_{n = 0}^{N-1} \langle \delta_n, \chi_I(H) \delta_n \rangle.
\end{equation}

Next note that
\begin{align}
\label{e.expansion}
\langle \delta_n, \chi_I(H) \delta_n \rangle
& =
\Big\langle \delta_n, \sum_{E_{k} \in I}\langle \psi_{k}, \delta_n \rangle \psi_{k} \Big\rangle \\
\nonumber & =
\sum_{E_{k} \in I} \langle \psi_{k}, \delta_n \rangle \langle \delta_n,  \psi_{k} \rangle \\
\nonumber & = \sum_{E_{k} \in I} |\psi_{k}(n)|^2.
\end{align}

Thus, we have
\begin{align*}
\int \chi_I \, dk
& =
\lim_{N \to \infty} \frac1N \sum_{n = 0}^{N-1} \sum_{E_{k} \in I} |\psi_{k}(n)|^2 \\
& \le \limsup_{N \to \infty} \frac1N \sum_{n = 0}^{N-1} \sum_{j \in \bbZ} c \cdot d^{-|n-\ell - j 2^m|} \\
& = c \cdot 2^{-m} \cdot \frac{1+d^{-1}}{1-d^{-1}} \\
& = c \cdot \varepsilon \cdot \frac{1+d^{-1}}{1-d^{-1}} \cdot |I|.
\end{align*}
Here we used \eqref{e.dosmeas2} and \eqref{e.expansion} in the first step, \eqref{eq:eigDecay} and \eqref{e.eigenvalues} in the second step, \eqref{eq:eigEst} in the third step, and \eqref{e.intervallength} in the fourth step.

This proves \eqref{e.goal} with the constant $c \cdot \varepsilon \cdot \frac{1+d^{-1}}{1-d^{-1}}$ for every rescaled dyadic interval $I$ of length $\varepsilon^{-1} 2^{-m}$, which in turn implies \eqref{e.goal} with the same constant for every interval $I$, concluding the proof.
\end{proof}

\end{document}